\definecolor{bananayellow}{rgb}{1.0, 0.88, 0.21}
\theoremstyle{plain}
\newtheorem{thm}{Theorem}
\newtheorem{lem}[thm]{Lemma}
\newtheorem{prob}{Problem}
\theoremstyle{definition} 
\newtheorem{defn}[thm]{Definition}
\newtheorem{rem}[thm]{Remark}
\begin{document}
%
\title{A Suboptimality Approach to\\ Distributed Linear Quadratic Optimal Control}
%
%
%
\author{Junjie~Jiao,~Harry~L.~Trentelman,~\IEEEmembership{Fellow,~IEEE,} and M.~Kanat~Camlibel,~\IEEEmembership{Member,~IEEE}

\thanks{Manuscript received \today.}

\thanks{The authors are with the Johann Bernoulli Institute for Mathematics and Computer Science, University of Groningen, Groningen, 9700 AV, The Netherlands (Email: \href{mailto:j.jiao@rug.nl}{j.jiao@rug.nl};
\href{mailto:h.l.trentelman@rug.nl}{h.l.trentelman@rug.nl}; \href{mailto:m.k.camlibel@rug.nl}{m.k.camlibel@rug.nl}).}
}
%
%
%

\markboth{Journal of \LaTeX\ Class Files,~Vol.~14, No.~8, \today} 
{Shell \MakeLowercase{\textit{et al.}}: Bare Demo of IEEEtran.cls for IEEE Communications Society Journals}
%



\maketitle

\begin{abstract}
	This paper is concerned with the distributed linear quadratic optimal control problem. 
	In particular, we consider a suboptimal version of the distributed optimal control problem for undirected multi-agent networks. 
	Given a multi-agent system with identical agent dynamics and an associated global quadratic cost functional, our objective is to design suboptimal distributed control laws that guarantee the controlled network to reach consensus and the associated cost to be smaller than an a priori given upper bound.
		We first analyze the suboptimality for a given linear system
	and then apply the results to linear multi-agent systems.
	Two design methods are then provided to compute such suboptimal distributed controllers, involving the solution of a single Riccati inequality of dimension equal to the dimension of the agent dynamics, and the smallest nonzero and the largest eigenvalue of the graph Laplacian. 
	Furthermore, we relax the requirement of exact knowledge of the smallest nonzero and largest eigenvalue of the graph Laplacian by using only lower and upper bounds on these eigenvalues.
	Finally, a simulation example is provided to illustrate our design method.
\end{abstract}

\begin{IEEEkeywords}
Distributed control, linear quadratic optimal control, suboptimality, consensus, multi-agent systems.
\end{IEEEkeywords}

%
\IEEEpeerreviewmaketitle

\section{Introduction}
\IEEEPARstart{I}{n} this paper, we study the distributed linear quadratic optimal control problem for multi-agent networks. 
In this problem, we are given a number of identical agents represented by a finite dimensional linear input-state system, and an undirected graph representing the communication between these agents. 
Given is also a quadratic cost functional that penalizes the differences between the states of neighboring agents and the size of the local control inputs. 
The distributed linear quadratic problem is then to find a distributed diffusive control law that, for given initial states of the agents, minimizes the cost functional, while achieving consensus for the controlled network. 
This problem is non-convex and difficult to solve, and it is unclear whether a solution exists in general \cite{lunze_conf_2014}.
Therefore, in this paper, instead of addressing the version formulated above, we will study a {\em suboptimal} version of the distributed optimal control problem. 
Our aim is to design suboptimal distributed diffusive control laws that guarantee the controlled network to reach consensus and the associated cost to be smaller than an a priori  given upper bound.

%
In the past, there has been work on the distributed optimal control problem before.
In \cite{jan_mm2015}, \cite{shen_zeng2017} and \cite{7862732}, it is shown that diffusive couplings are necessary for minimizing a cost functional that integrates a quadratic form involving state differences and inputs. 
However, these papers do not provide a design method for finding an optimal distributed control law.

On the other hand, there has been some work on the design of distributed diffusive control laws.
It is shown in \cite{tuna2008lqr} and \cite{hongwei_zhang2011} that, using the distributed control law derived from the solution of a local algebraic  Riccati equation, synchronization is achieved with sufficiently large coupling gain. However, no cost functionals were taken into consideration.
In \cite{tamas2008}, a design method was introduced for computing distributed suboptimal controllers, which requires the solution of a single LQR problem whose size depends on the maximum node degree of the communication graph. 
In \cite{wei_ren2010}, the authors consider a distributed optimal control problem for multi-agent systems with single integrator agent dynamics, and obtain an expression for the optimal distributed diffusive control law.
	In addition, a distributed optimal control problem was considered from the perspective of cooperative game theory  in \cite{Semsar2009}. The problem there is then solved by transforming it into a maximization problem for LMI's, taking into consideration the structure of the graph Laplacian.
For related work we also mention  \cite{lunze_ecc_2013}, \cite{guaranteed_cost}, \cite{Lunze_ijc_2014} and \cite{Fazelnia2017}, to name a few.
Also,  in \cite{Nguyen2015}, a hierarchical control approach was introduced for linear leader-follower multi-agent systems. 
For the case that the weighting matrices in the cost functional are chosen to be of a special form, two suboptimal controller design methods are given. 
In addition, in \cite{kristian2014},  an inverse optimal control problem was addressed both for leader-follower and leaderless multi-agent systems.  
For a  class of digraphs, the authors show that distributed optimal controllers exist and can be obtained if the weighting matrices are assumed to be of a special form, capturing the graph information. 
For other papers related to distributed inverse optimal control, see also \cite{huaguang_zhang2015}, \cite{nguyen_2017}.

As announced before, in this paper our objective is to design distributed diffusive control laws that guarantee the controlled network to reach consensus and the associated cost to be smaller than an  a priori given upper bound.
The main contributions of the paper are the following: 

\begin{enumerate}
\item 
	We present two design methods for computing suboptimal distributed diffusive control laws, both based on computing a positive semi-definite solution of a single Riccati inequality of dimension equal to the dimension of the agent dynamics.  
	In the computation of the local control gain, the smallest nonzero eigenvalue and the largest eigenvalue of the graph Laplacian are involved.
\item  
	For the case that exact information on the smallest nonzero eigenvalue and the largest eigenvalue of the graph Laplacian is not available, we establish a design method using only lower and upper bounds on these Laplacian eigenvalues.
\end{enumerate}

The remainder of this paper is organized as follows. 
In Section \ref{sec_notation}, we introduce the basic notation and formulate the suboptimal distributed  linear quadratic control problem. 
Section \ref{sec_single_sys} presents the analysis and design of suboptimal linear quadratic control for linear systems, 
collecting preliminary results for treating the actual suboptimal  distributed control problem for multi-agent systems. 
Then, in Section \ref{sec_mas}, we study the suboptimal distributed control problem for linear multi-agent systems.
In addition, a simulation example is provided in Section \ref{sec_simulation} to illustrate our results. 
Finally, Section \ref{sec_conclusion} concludes this paper.


\section{Notation and Problem Formulation}\label{sec_notation}

\subsection{Notation}
We denote by $\mathbb{R}$ the field of real numbers, by $\mathbb{R}^{n\times m}$ the set of $n\times m$ real matrices. 
For a given matrix $A$, its transpose and inverse (if it exists) are denoted by $A^{\top}$ and $A^{-1}$, respectively.
The identity matrix of dimension $n \times n$ is denoted by $I_n$. 
We denote the Kronecker product of two matrices $A$ and $B$ by $A\otimes B$, which has the property that $(A_1\otimes B_1)(A_2\otimes B_2) = A_1 A_2\otimes B_1 B_2 $. 
For a given symmetric matrix $P$ we denote $P>0$ if it is positive definite and $P\geq 0$ if it is positive semi-definite.
By  $\text{diag} (  a_1, a_2, \ldots, a_n )$,
we denote the $n\times n$ diagonal matrix with $a_1, a_2, \ldots, a_n $ on the diagonal.
The column vector $\mathbf{1}_n\in \mathbb{R}^n$ denotes the vector whose components are all $1$.

Throughout this paper, an undirected graph is denoted by $\mathscr{G} = (\mathscr{V},\mathscr{E})$ with nonempty finite set of $N$ nodes $\mathscr{V} = \{ v_1, v_2, \ldots, v_N \}$ and edge set $\mathscr{E} = \{ e_1, e_2, \ldots,e_M \}$.
A pair $(v_i, v_j) \in \mathscr{E}$, with $v_i, v_j\in \mathscr{V}$ and $i \neq j$, represents an edge from node $i$ to node $j$.
The graph is called undirected if $(v_i, v_j) \in \mathscr{E}$ implies $(v_j, v_i) \in \mathscr{E}$.
The neighbor set of node $i$ is denoted by $\mathscr{N}_i =\{ v_j \in \mathscr{V}:(v_i, v_j)\in \mathscr{E} \}$.
The Laplacian matrix $L$ of an undirected graph is symmetric and consequently has real eigenvalues. 
For an undirected graph, all eigenvalues of Laplacian are nonnegative and it always has $0$ as an eigenvalue. 
The graph is connected if and only if $0$ is a simple eigenvalue of $L$. 
In the sequel,  assume that $\mathscr{G}$ is connected. In that case the eigenvalues of $L$ can be ordered in increasing order as $0=\lambda_1 < \lambda_2 \leq \cdots \leq \lambda_N$ and there exists an orthogonal matrix $U$ such that 
$U^{\top}LU = \text{diag} ( 0, \lambda_2, \ldots, \lambda_N )$.
%
Moreover, there holds that $U = \left( \frac{1}{\sqrt{N}}\mathbf{1}_N\quad U_2 \right)$ and $U_2 U^{\top}_2 = I_N - \frac{1}{N}\mathbf{1}_N\mathbf{1}_N^{\top}$.

\subsection{Problem Formulation}
In this paper, we consider a multi-agent system consisting of $N$ identical agents. 
The underlying graph is assumed to be undirected and connected, and the corresponding Laplacian matrix is denoted by $L$. 
The dynamics of the identical agents is represented by
the continuous-time linear time-invariant
 (LTI) system given by 
\begin{equation}\label{sys_i}
\dot{x}_i(t) = Ax_i(t) + Bu_i(t),\quad x_i(0) = x_{i0}, \quad i=1, 2, \ldots,N
\end{equation}
where $A\in \mathbb{R}^{n\times n}$, $B\in \mathbb{R}^{n\times m}$, and $x_i\in \mathbb{R}^{n}, u_i \in \mathbb{R}^{m}$ are the state and input of the $i$-th agent, respectively. 
Throughout this paper, we assume  that the pair $(A,B)$ is stabilizable.

We consider the infinite horizon distributed linear quadratic optimal control problem for  multi-agent system (\ref{sys_i}), where the global cost functional integrates the weighted quadratic difference of states between every agent and its neighbors, and also penalizes the inputs in a quadratic form. 
Thus, the cost functional  considered in this paper is given by
\begin{equation}\label{cost_i}
J(u) = \int_{0}^{\infty}\frac{1}{2}\sum_{i=1}^{N}\sum_{j\in \mathscr{N}_i}(x_i-x_j)^{\top} Q (x_i-x_j) + \sum_{i=1}^{N}u_i^{\top}R u_i \ dt
\end{equation}
where $Q\geq 0$ and $R > 0$ are given real  weighting matrices.

We can rewrite multi-agent system (\ref{sys_i}) in compact form as
\begin{equation}\label{net_sys_1}
\dot{x} = (I_N\otimes A) x + (I_N\otimes B)u,\quad x(0) =x_0
\end{equation}
with $x = \left( x_1^{\top},\ldots,x_N^{\top} \right)^{\top}$, $u = \left( u_1^{\top},\ldots,u_N^{\top} \right)^{\top}$, where $x\in \mathbb{R}^{nN}$, $u\in \mathbb{R}^{mN}$ contain the  states and inputs of all agents, respectively.
Note that, although the agents have identical dynamics, we allow the initial states of the individual agents to differ. 
These initial states are collected in the joint vector of initial states $x_0$.
Moreover, we can also write the cost functional (\ref{cost_i}) in compact form as
\begin{equation}\label{cost_all}
J(u) = \int_{0}^{\infty} x^{\top}(L\otimes Q)x +u^{\top}(I_N\otimes R)u \ dt.
\end{equation}

The  distributed linear quadratic problem is the problem of minimizing the cost functional (\ref{cost_all}) over all distributed control laws that achieve consensus. 
By a distributed control law we mean a control law of the form 
\begin{equation}\label{controller}
u = (L\otimes K) x,
\end{equation} 
where 
$K \in \mathbb{R}^{m\times n}$ is an identical feedback gain for all agents.

By interconnecting the agents using this control law, we obtain the overall network dynamics
\begin{equation}\label{sys_closed}
\dot{x} = (I_N\otimes A + L \otimes BK) x.
\end{equation}
Foremost, we want the control law to achieve consensus:
\begin{defn}\label{def_consensus}
	We say the network reaches consensus using control law (\ref{controller}) if for all $i,j =1,2,\ldots,N$ and for all initial conditions on $x_i$ and $x_j$, we have $$x_i(t)-x_j(t) \rightarrow 0
	\text{  as  } t \rightarrow \infty.$$
\end{defn}

As a function of the to be designed feedback gain $K$, the cost functional (\ref{cost_all}) can be rewritten as
\begin{equation}\label{cost_all_new}
J(K) = \int_{0}^{\infty}x^{\top}\left(L\otimes Q + L^2\otimes K^{\top} R K\right)x  \ dt.
\end{equation}
In other words, the distributed linear quadratic control problem is to minimize (\ref{cost_all_new}) over all $K \in \mathbb{R}^{m\times n}$  
 such that the controlled network (\ref{sys_closed}) reaches consensus.

Due to the distributed nature of the control law (\ref{controller}) as imposed by the network topology, the distributed linear quadratic problem is a non-convex optimization problem. 
It is therefore difficult, if not impossible, to find a closed form solution for an optimal controller, or such optimal controller may not even exist.
Therefore, as already announced in the introduction, in this paper we will study and resolve a version of this problem involving the design of suboptimal distributed control laws. 
Specifically, we want to design distributed suboptimal controllers  of the form  (\ref{controller}) for system (\ref{net_sys_1}) such that consensus is achieved and the associated cost functional (\ref{cost_all_new}) is smaller than an a priori given upper bound. 
More concretely, we will consider the following problem:
\begin{prob}\label{prob1}
Consider  multi-agent system (\ref{net_sys_1}) with identical linear agent dynamics and given initial state $x(0) = x_0$. 
Assume the network graph is a connected undirected graph with Laplacian $L$. 
Consider the associated cost functional given by (\ref{cost_all}). 
Let $\gamma >0$ be an a priori given upper bound for the cost to be achieved.
The problem is to find a distributed controller of the form (\ref{controller}) so that the controlled network (\ref{sys_closed}) reaches consensus and the cost (\ref{cost_all_new}) associated with this controller is smaller than the given upper bound, i.e., $J(K) < \gamma$.
\end{prob}

Before we address  Problem \ref{prob1}, we first briefly discuss the suboptimal linear quadratic optimal problem for a single linear system. 
This will be the subject of the next section.

\section{Suboptimal Control for Linear Systems}\label{sec_single_sys}
In this section, we consider the linear quadratic suboptimal problem for a single linear system.  
We will first analyze the quadratic performance of a given autonomous system.
Subsequently, we will discuss how to design suboptimal control laws for a linear system with inputs.

\subsection{Suboptimality analysis for autonomous systems}
Consider the autonomous system 
\begin{equation}\label{sys_auto}
\dot{x}(t) =\bar{A}x(t),\quad x(0) = x_0
\end{equation}
where  $\bar{A}\in \mathbb{R}^{n\times n}$ and $x\in \mathbb{R}^{n}$ is the state.
We consider the quadratic performance of system (\ref{sys_auto}),  given by
\begin{equation}\label{cost_auto}
J = \int_{0}^{\infty} x^{\top}\bar{Q} x \ dt
\end{equation}
where  $\bar{Q}\geq 0$ is a given real weighting matrix.
Note that the performance $J$ is finite if  system (\ref{sys_auto}) is stable, i.e., $\bar{A}$ is Hurwitz.

We are interested in finding conditions such that the performance (\ref{cost_auto}) of system (\ref{sys_auto}) is smaller than a given upper bound. 
For this, we have the following lemma (see also \cite{algebraic_1997}, \cite{harry_book}):
\begin{lem}\label{lem_autonomous}
	Consider system  (\ref{sys_auto}) with the corresponding quadratic performance (\ref{cost_auto}). 
	The performance is finite if system (\ref{sys_auto}) is stable, i.e., $\bar{A}$ is Hurwitz.
	In this case, it is given by 
	\begin{equation}\label{quad_perf}
		J = x_0^{\top} Yx_0,
	\end{equation}
	where $Y$ is the unique positive semi-definite solution of
	\begin{equation}\label{lyap_eq}
		 \bar{A}^{\top} Y + Y\bar{A} + \bar{Q} = 0.
	\end{equation}
	Alternatively,
	\begin{equation}\label{lyap_ineq}
	\begin{split}
		J =\inf  \{ x_0^{\top} Px_0 \ | \ P \geq 0 \text{ and } \bar{A}^{\top} P + P\bar{A}  + \bar{Q} < 0 \}.
	\end{split}
	\end{equation}
	
\end{lem}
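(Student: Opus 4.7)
The plan is to organize the proof in three steps mirroring the three assertions of the lemma.

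\textbf{Step 1 (closed-form expression).} Since $\bar A$ is Hurwitz, the solution $x(t)=e^{\bar A t}x_0$ decays exponentially, so the integral defining $J$ converges absolutely. Standard Lyapunov theory guarantees that $Y:=\int_0^\infty e^{\bar A^\top t}\bar Q\, e^{\bar A t}\,dt$ is a well-defined positive semi-definite matrix, and a direct differentiation shows that it satisfies (\ref{lyap_eq}). Equivalently, for any PSD solution $Y$ of (\ref{lyap_eq}) one can set $V(t):=x(t)^\top Y x(t)$ and compute $\dot V(t)=-x(t)^\top \bar Q x(t)$. Integrating from $0$ to $\infty$ and using $V(t)\to 0$ (a consequence of $\bar A$ being Hurwitz) yields (\ref{quad_perf}).

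\textbf{Step 2 (uniqueness).} If $Y_1,Y_2$ are both PSD solutions of (\ref{lyap_eq}), then $Z:=Y_1-Y_2$ satisfies the homogeneous Lyapunov equation $\bar A^\top Z+Z\bar A=0$. Because $\bar A$ is Hurwitz, it shares no eigenvalue with $-\bar A$, so the Lyapunov operator $X\mapsto \bar A^\top X+X\bar A$ is invertible and $Z=0$.

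\textbf{Step 3 (infimum characterization).} To establish $J\le x_0^\top P x_0$ for every feasible $P$, I would again use $V(t):=x(t)^\top P x(t)$: the strict inequality $\bar A^\top P+P\bar A+\bar Q<0$ together with the exponential decay of $x(t)$ gives, upon integrating $\dot V$ from $0$ to $\infty$,
\begin{equation*}
x_0^\top P x_0 \;=\; \int_0^\infty x^\top\bigl(-\bar A^\top P-P\bar A\bigr)x\,dt \;>\; \int_0^\infty x^\top \bar Q\, x\,dt \;=\; J.
\end{equation*}
For the reverse bound, I would perturb: let $P_\varepsilon$ be the unique PSD solution of $\bar A^\top P_\varepsilon+P_\varepsilon\bar A+(\bar Q+\varepsilon I)=0$ (Step 1 applied to $\bar Q+\varepsilon I$). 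Then $\bar A^\top P_\varepsilon+P_\varepsilon\bar A+\bar Q=-\varepsilon I<0$, so $P_\varepsilon$ is feasible in (\ref{lyap_ineq}); and, by continuity of the Lyapunov map in its right-hand side, $P_\varepsilon\to Y$ as $\varepsilon\downarrow 0$, so $x_0^\top P_\varepsilon x_0\to J$. Combining the two bounds delivers (\ref{lyap_ineq}).

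The main technical delicacy is this last step: because the strict Lyapunov inequality precludes $Y$ itself from being feasible, the infimum is not attained, and one must argue through the perturbed solutions $P_\varepsilon$ that the infimum is nevertheless equal to $J$. The other ingredients (finiteness of $J$, derivation of the Lyapunov equation, uniqueness) are standard consequences of the Hurwitz assumption on $\bar A$.
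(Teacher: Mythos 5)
Your proposal is correct and follows essentially the same route as the paper: the closed-form expression and uniqueness are the standard Lyapunov facts the paper cites as well known, the bound $J\le x_0^\top P x_0$ for feasible $P$ is obtained by an argument equivalent to the paper's comparison $P-Y>0$, and the approximation from above uses exactly the paper's perturbed equation $\bar A^\top P_\varepsilon+P_\varepsilon\bar A+\bar Q+\varepsilon I=0$ with $P_\varepsilon\to Y$. The only nitpick is that your strict inequality $x_0^\top P x_0>J$ fails when $x_0=0$; the correct (and sufficient) conclusion is $J\le x_0^\top P x_0$.
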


\begin{proof}
The fact that the quadratic performance (\ref{cost_auto}) is given by the quadratic expression (\ref{quad_perf})  involving the Lyapunov equation (\ref{lyap_eq}) is well-known.

We will now prove (\ref{lyap_ineq}). 
Let $Y$ be the solution to Lyapunov equation (\ref{lyap_eq}) and let $P$ be a positive semi-definite solution to the Lyapunov inequality in (\ref{lyap_ineq}). Define $ X := P - Y$.  Then we have
\begin{equation*}
	\bar{A}^{\top} (X + Y) + (X + Y) \bar{A}  + \bar{Q} < 0.
\end{equation*}
So consequently,
\begin{equation*}
	\bar{A}^{\top} X + X \bar{A}   <0.
\end{equation*}
Since $\bar{A}$ is Hurwitz, it follows that $X>0$.
Thus, we have $P > Y$ and hence $J \leq x_0^{\top}Px_0$ for any positive semi-definite solution $P$ to the Lyapunov inequality.

Next we will show that for any $\epsilon >0$ there exists a positive semi-definite matrix $P_{\epsilon}$ satisfying the Lyapunov inequality such that $P_{\epsilon} < Y+\epsilon I$, and consequently $x_0^{\top}P_{\epsilon}x_0 \leq J +\epsilon \|x_0 \|^2$. 
Indeed, for given $\epsilon$, take $P_{\epsilon}$ equal to the unique positive semi-definite solution of
\begin{equation}\label{new_lya}
\bar{A}^{\top} P + P\bar{A}  + \bar{Q} +\epsilon I= 0.
\end{equation}
Clearly then, $P_{\epsilon} =  \int_{0}^{\infty} e^{\bar{A}^{\top}t} (\bar{Q} +\epsilon I)e^{\bar{A}t}\ dt$, 
so $P_{\epsilon} \downarrow Y$ as $\epsilon \downarrow 0$. This proves our claim.
\end{proof}

The following theorem now yields {\em necessary} and {\em sufficient} conditions such that, for a given upper bound $\gamma >0$, the quadratic performance (\ref{cost_auto}) satisfies $J < \gamma$.

\begin{thm}\label{thm_autonomous}
	Consider system (\ref{sys_auto}) with the associated quadratic performance (\ref{cost_auto}).
	For given $\gamma > 0$, we have that $\bar{A}$ is Hurwitz and $J < \gamma$ if and only if there exists a positive semi-definite solution $P$ satisfying
		\begin{align}
		\bar{A}^{\top} P  + 	P\bar{A} + \bar{Q} &< 0,\label{lya_ineq_3} \\
		x_0^{\top} Px _0  &< \gamma. \label{conditions_p}
	\end{align}
\end{thm}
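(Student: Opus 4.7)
The plan is to prove the \textquotedblleft if and only if\textquotedblright\ by splitting it into necessity and sufficiency, and leveraging Lemma \ref{lem_autonomous} heavily on both sides.

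For the necessity (``only if'') direction, I assume $\bar{A}$ is Hurwitz and $J<\gamma$. Using the $\epsilon$-perturbation construction already carried out at the end of the proof of Lemma \ref{lem_autonomous}, pick $\epsilon>0$ small enough that $J+\epsilon\|x_0\|^{2}<\gamma$, and let $P_{\epsilon}$ be the unique positive semi-definite solution of $\bar{A}^{\top}P_{\epsilon}+P_{\epsilon}\bar{A}+\bar{Q}+\epsilon I=0$. Then $P_{\epsilon}\geq 0$ satisfies the strict inequality (\ref{lya_ineq_3}), and the bound $x_{0}^{\top}P_{\epsilon}x_{0}\leq J+\epsilon\|x_0\|^{2}<\gamma$ gives (\ref{conditions_p}). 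So $P:=P_{\epsilon}$ works.

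For the sufficiency (``if'') direction, assume some $P\geq 0$ satisfies (\ref{lya_ineq_3}) and (\ref{conditions_p}). The first task is to show that $\bar{A}$ is Hurwitz; this is the step I expect to be the most delicate, since $P$ is only assumed semi-definite, so I cannot invoke the textbook Lyapunov theorem directly. I handle it spectrally: if $\lambda\in\mathbb{C}$ is any eigenvalue of $\bar{A}$ with eigenvector $v\neq 0$, then
\begin{equation*}
v^{\ast}\bigl(\bar{A}^{\top}P+P\bar{A}+\bar{Q}\bigr)v = 2\,\mathrm{Re}(\lambda)\,v^{\ast}Pv+v^{\ast}\bar{Q}v.
\end{equation*}
The left-hand side is strictly negative by (\ref{lya_ineq_3}), while $P\geq 0$ and $\bar{Q}\geq 0$ make both terms on the right non-negative whenever $\mathrm{Re}(\lambda)\geq 0$. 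This contradiction forces $\mathrm{Re}(\lambda)<0$, so $\bar{A}$ is Hurwitz.

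Once $\bar{A}$ is Hurwitz, I conclude by Lemma \ref{lem_autonomous} that $J=x_{0}^{\top}Yx_{0}$ with $Y\geq 0$ solving the Lyapunov equation (\ref{lyap_eq}). The lemma's proof showed that any $P\geq 0$ satisfying the strict Lyapunov inequality in fact satisfies $P>Y$, and hence $x_{0}^{\top}Yx_{0}\leq x_{0}^{\top}Px_{0}$. Combined with (\ref{conditions_p}) this gives $J=x_{0}^{\top}Yx_{0}\leq x_{0}^{\top}Px_{0}<\gamma$, completing the proof. The only genuinely new piece of reasoning beyond Lemma \ref{lem_autonomous} is the spectral argument for Hurwitzness under mere semi-definiteness of $P$; the rest is essentially a repackaging of the infimum characterization (\ref{lyap_ineq}).
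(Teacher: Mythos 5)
Your proof is correct and follows essentially the same route as the paper: both directions reduce to Lemma \ref{lem_autonomous}, with the ``only if'' part obtained from the $\epsilon$-perturbed Lyapunov equation and the ``if'' part from the bound $J \leq x_0^{\top}Px_0$. The only difference is that you explicitly justify, via the eigenvector identity $v^{\ast}(\bar{A}^{\top}P+P\bar{A}+\bar{Q})v = 2\,\mathrm{Re}(\lambda)v^{\ast}Pv+v^{\ast}\bar{Q}v$, why a merely positive semi-definite solution of the strict inequality (\ref{lya_ineq_3}) forces $\bar{A}$ to be Hurwitz --- a step the paper asserts without proof --- and that argument is valid.
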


\begin{proof}
(if)
Since there exists a positive semi-definite solution to the Lyapunov inequality (\ref{lya_ineq_3}), it follows that $\bar{A}$ is Hurwitz.
Take a positive semi-definite matrix  $P$ satisfying
the inequalities 
(\ref{lya_ineq_3}) and (\ref{conditions_p}).
By Lemma \ref{lem_autonomous}, we then immediately have $J \leq x_0^{\top}Px_0 <\gamma$.

(only if)
If $\bar{A}$ is Hurwitz and $J <\gamma$, then, again by Lemma \ref{lem_autonomous}, there exists a positive semi-definite solution $P$ to the Lyapunov inequality (\ref{lya_ineq_3}) such that $J \leq x^{\top}_0 P x_0 < \gamma$. 
\end{proof}

\begin{rem}\label{rem_autonomous}
	Theorem \ref{thm_autonomous} provides a necessary and sufficient condition for the performance of (\ref{sys_auto}) to be less than a given upper bound.
	Given initial condition $x_0$, we can either solve equation (\ref{lyap_eq}) and compute $J$ to check whether $J < \gamma$.
	Alternatively, there exists a positive semi-definite solution to the linear matrix inequalities (\ref{lya_ineq_3}) and (\ref{conditions_p}) if and only if $J <\gamma$. 
\end{rem}

In the next subsection, we will discuss the suboptimal control problem for a linear system with inputs.

\subsection{Suboptimal control design for  linear systems with inputs}
In this section, we consider the  finite dimensional  LTI system given by
\begin{equation}\label{sys_input}
\dot{x}(t) = Ax(t) + Bu(t),\quad x(0) =x_0
\end{equation}
where $A\in \mathbb{R}^{n\times n}, B\in \mathbb{R}^{n\times m}$, and $x\in \mathbb{R}^{n}$, $u\in \mathbb{R}^{m}$ are state and input, respectively. 
Assume that the pair $(A,B)$ is stabilizable. 
The associated cost functional is given by  
\begin{equation}\label{cost_input}
J(u) = \int_{0}^{\infty} x^{\top}Q x + u^{\top}R u \ dt
\end{equation}
where $Q \geq 0$ and $R > 0$ are given weighting matrices that penalize the state and input, respectively.

We want to find a state feedback control law $u=Kx$ such that the closed system 
\begin{equation}\label{close_sys}
\dot{x}(t) = (A + BK)x(t),\quad x(0) =x_0
\end{equation}
is stable and, for a given upper bound $\gamma >0$, the corresponding cost 
\begin{equation}\label{cost_k}
J(K) = \int_{0}^{\infty} x^{\top}(Q + K^{\top} R K) x \ dt
\end{equation}
satisfies $J(K) < \gamma$.


The following theorem gives us a sufficient condition for the existence of such control law.

\begin{thm}\label{thm_single_sys}
	Consider system (\ref{sys_input}) with the associated cost functional  (\ref{cost_input}).
	Assume that the pair $(A,B)$ is stabilizable. Let $\gamma >0$.
	Suppose that there exists 
	a positive semi-definite $P$ satisfying
		\begin{align}
		A^{\top} P + PA - PBR^{-1}B^{\top}P +  Q &< 0,\label{ineq_p} \\
		x_0^{\top} P x_0 &< \gamma. \label{ini_gamma}
	\end{align}
	Let $K:=- R^{-1}B^{\top}P$. Then the controlled system (\ref{close_sys}) is  stable and the control law $u = Kx$ is suboptimal, i.e., $J(K) <\gamma$. 
\end{thm}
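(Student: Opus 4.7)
The plan is to reduce this to the autonomous case (Theorem~\ref{thm_autonomous}) by the standard LQR ``completing the square'' trick. Define the closed-loop matrix $\bar{A} := A + BK = A - BR^{-1}B^{\top}P$, and take as the autonomous weighting matrix $\bar{Q} := Q + K^{\top}R K = Q + P B R^{-1} B^{\top} P$. The key observation is that the Riccati inequality (\ref{ineq_p}) is algebraically equivalent to a Lyapunov inequality $\bar{A}^{\top} P + P \bar{A} + \bar{Q} < 0$ for the closed-loop system with this new weight.

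First, I would verify this equivalence by a direct substitution: expanding
\[
\bar{A}^{\top} P + P \bar{A} + Q + K^{\top}R K = A^{\top}P + PA - 2\, P B R^{-1} B^{\top} P + Q + P B R^{-1} B^{\top} P,
\]
which collapses to $A^{\top}P + PA - P B R^{-1} B^{\top} P + Q$, and this is negative definite by hypothesis (\ref{ineq_p}). Hence $P \geq 0$ satisfies the Lyapunov inequality associated with the closed-loop system $\dot x = \bar A x$ and the cost density $x^{\top}(Q + K^{\top} R K) x$. Note also that (\ref{cost_k}) is exactly the quadratic performance (\ref{cost_auto}) of this closed-loop autonomous system with $\bar Q = Q + K^\top R K$.

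With these identifications in place, I would invoke Theorem~\ref{thm_autonomous} directly: the existence of a positive semi-definite $P$ satisfying the Lyapunov inequality $\bar{A}^{\top}P + P\bar{A} + \bar{Q} < 0$ together with the bound $x_0^{\top}P x_0 < \gamma$ is a sufficient condition guaranteeing that $\bar{A}$ is Hurwitz (so the controlled system (\ref{close_sys}) is stable) and that the performance satisfies $J(K) < \gamma$. This yields both conclusions of the theorem in a single step.

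There is no real obstacle here: the proof is essentially one algebraic identity followed by an application of a previously established result. The only thing to watch is the sign and bookkeeping in the expansion of $\bar{A}^{\top}P + P\bar{A} + K^{\top}R K$ to make sure the two $-P B R^{-1} B^{\top} P$ terms combine with the $+P B R^{-1} B^{\top} P$ from $K^{\top}RK$ to leave the single $-P B R^{-1} B^{\top} P$ that appears in the Riccati inequality.
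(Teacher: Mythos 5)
Your proposal is correct and follows essentially the same route as the paper: substitute $K=-R^{-1}B^{\top}P$, observe that the Riccati inequality (\ref{ineq_p}) rewrites as the closed-loop Lyapunov inequality with $\bar{A}=A-BR^{-1}B^{\top}P$ and $\bar{Q}=Q+PBR^{-1}B^{\top}P$, and then apply Theorem~\ref{thm_autonomous} together with (\ref{ini_gamma}). The algebraic bookkeeping you flag checks out, and obtaining stability and the bound $J(K)<\gamma$ in one invocation of Theorem~\ref{thm_autonomous} is only a cosmetic difference from the paper, which notes the Hurwitz property separately first.
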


\begin{proof}
Substituting $K:= -R^{-1}B^{\top}P$ into (\ref{close_sys}) yields
\begin{equation}\label{closed_sys_pp}
	\dot{x}(t) = (A-BR^{-1}B^{\top}P)x(t) ,\quad x(0) =x_0.
\end{equation}
Since $P$ satisfies (\ref{ineq_p}), it should also satisfy
\begin{equation*}
(A-BR^{-1}B^{\top}P)^{\top}P + P(A-BR^{-1}B^{\top}P) + Q + PBR^{-1}B^{\top}P < 0,
\end{equation*}
which implies that $A-BR^{-1}B^{\top}P$ is Hurwitz,
i.e., the closed system (\ref{closed_sys_pp}) is stable. 
Consequently, the corresponding cost is finite and equal to
\begin{equation*}
	J(K) = \int_{0}^{\infty} x^{\top} (Q + PBR^{-1}B^{\top}P) x  \ dt.
\end{equation*}
Since (\ref{ini_gamma}) holds, by taking $\bar{A} = A-BR^{-1}B^{\top}P$ and $\bar{Q} = Q + PBR^{-1}B^{\top}P$ in Theorem \ref{thm_autonomous}, we immediately have $J(K) < \gamma$.
\end{proof}


\begin{rem}
	Theorem \ref{thm_single_sys} provides a method to find a class of suboptimal control laws satisfying $J(K)<\gamma$. 
	Choosing the feedback gain as $K := -R^{-1}B^{\top}P$  is one possible choice for  such suboptimal control laws.
	For other design methods see also \cite{guaranteed_cost}.
\end{rem}

\begin{rem}
	Note that the suboptimal control design given in Theorem \ref{thm_single_sys} is more flexible than the optimal control design. Any positive semi-definite matrix $P$  satisfying inequalities (\ref{ineq_p}) and (\ref{ini_gamma}) makes control law $u=Kx$ with $K = -R^{-1}B^{\top}P$ suboptimal with respect to $J(K) <\gamma$. 
\end{rem}


In the next section, we will show how to apply the above design method for suboptimal control to the distributed linear quadratic control problem for multi-agent systems.

\section{Suboptimal Control Design for Linear Multi-Agent Systems}\label{sec_mas}
In this section, we consider the distributed linear quadratic control problem for a multi-agent system consisting of $N$ agents with identical finite dimensional LTI system.

As in Section II, the dynamics of the identical agents is represented by
\begin{equation}
\dot{x}_i(t) = Ax_i(t) + Bu_i(t), \quad x_{i}(0) =x_{i0}, \quad i = 1, 2, \ldots,N
\end{equation}
where $A\in \mathbb{R}^{n\times n}$, $B\in \mathbb{R}^{n\times m}$, and $x_i\in \mathbb{R}^{n}, u_i \in \mathbb{R}^{m}$ are the state and input of $i$-th agent, respectively.
Assume that the pair $(A,B)$ is stabilizable, and the underlying graph is an undirected connected graph with  corresponding Laplacian denoted by $L$.

Denoting $x = \left( x_1^{\top},\ldots,x_N^{\top} \right)^{\top}$, $u = \left( u_1^{\top},\ldots,u_N^{\top} \right)^{\top}$, 
we can rewrite the  multi-agent system in compact form as
\begin{equation}\label{net_sys}
\dot{x} = (I_N\otimes A) x + (I_N\otimes B)u,\quad x(0) =x_0.
\end{equation}

The cost functional we consider was already introduced in (\ref{cost_all}). We repeat it here for convenience:
\begin{equation}\label{cost_all_2}
J(u) = \int_{0}^{\infty}x^{\top}(L\otimes Q)x +u^{\top}(I_N\otimes R)u \ dt
\end{equation}
where  $Q\geq 0$ and $R > 0$ are given real  weighting matrices.

As already formulated in Problem \ref{prob1}, given a desired upper bound $\gamma >0$ for multi-agent system (\ref{net_sys}) with given initial state $x(0) = x_0$, we want to design a control law of the form 
\begin{equation}\label{control_dis}
	u=(L\otimes K)x
\end{equation} 
where $K \in \mathbb{R}^{m\times n}$ is an identical feedback gain for all agents,
such that the controlled network 
\begin{equation}\label{net_closed}
\dot{x} = (I_N\otimes A + L \otimes BK) x
\end{equation}
reaches consensus  and, moreover, the associated cost 
\begin{equation}\label{cost_kk}
J(K) = \int_{0}^{\infty}x^{\top}\left(L\otimes Q +L^2\otimes K^{\top} R K\right)x  \ dt
\end{equation}
is smaller than the given upper bound, i.e., $J(K)<\gamma$.

Let the matrix $U\in \mathbb{R}^{N\times N}$ be an orthogonal matrix that diagonalizes the Laplacian $L$. 
Define $\Lambda : = U^{\top}L U = \text{diag} ( 0, \lambda_2,\ldots,\lambda_N )$.
%
To simplify the problem given above, by applying the state and input transformations $\bar{x} =(U^{\top}\otimes I_n)x$ and $\bar{u} =(U^{\top}\otimes I_m)u$ with $\bar{x} = \left( \bar{x}_1^{\top},\ldots,\bar{x}_N^{\top} \right)^{\top}$, $\bar{u} = \left( \bar{u}_1^{\top},\ldots,\bar{u}_N^{\top} \right)^{\top}$,  system (\ref{net_sys})  becomes
\begin{equation}\label{new_newtwork_sys}
\dot{\bar{x}} = (I_N\otimes A) \bar{x} + (I_N\otimes B)\bar{u},\quad \bar{x}(0) =\bar{x}_{0},
\end{equation}
with $\bar{x}_0 = (U^{\top} \otimes I_n)x_0$.
Clearly,  (\ref{control_dis}) 
is transformed to
\begin{equation}\label{control_law}
\bar{u} = (\Lambda \otimes K)\bar{x},
\end{equation}
and the controlled network (\ref{net_closed}) transforms to
\begin{equation}\label{clsed_sys}
	\dot{\bar{x}} = \left(I_N\otimes A + \Lambda \otimes BK\right) \bar{x} .
\end{equation}
In terms of the transformed variables, the cost (\ref{cost_kk}) is given by
\begin{equation}\label{new_newtwork_cost}
{J}(K) = \int_{0}^{\infty}\sum_{i=1}^{N} \bar{x}_i^{\top} (\lambda_i Q + \lambda_i^2 K^{\top} R K) \bar{x}_i\ dt.
\end{equation}
Note that the transformed states $\bar{x}_i$ and inputs $\bar{u}_i$, $i = 2,3,\ldots, N$ appearing in system (\ref{clsed_sys}) and cost (\ref{new_newtwork_cost}) are decoupled from each other, so that we can write  system (\ref{clsed_sys}) and cost  (\ref{new_newtwork_cost}) as
\begin{align}
	\dot{\bar{x}}_1 &= A\bar{x}_1,\label{sys1} \\
	\dot{\bar{x}}_i &= (A +\lambda_i BK)\bar{x}_i ,\quad i = 2,3,\ldots,N, \label{closed_loop}
\end{align}
and
\begin{equation}\label{cost_cost}
{J}(K)= \sum_{i=2}^{N}{J}_i(K)
\end{equation}
with
\begin{equation}\label{coco}
{J}_i(K) = \int_{0}^{\infty} \bar{x}_i^{\top} (\lambda_i Q + \lambda_i^2 K^{\top}RK) \bar{x}_i\ dt, \quad i = 2, 3, \ldots,N.
\end{equation}
Note that $\lambda_1 = 0$, and that therefore (\ref{sys1}) does not contribute to the cost $J(K)$.

We first record a well-known fact (see  \cite{zhongkui_li_unified_2010}, \cite{harry_2013}) that we will use later:
\begin{lem}\label{lem_stable_consensus}
	Consider the multi-agent system with identical agent dynamics (\ref{net_sys}). Assume that the network graph is undirected and connected. Then the controlled network reaches consensus with control law (\ref{control_dis}) if and only if, for $i=2,3,\ldots,N$, systems (\ref{closed_loop}) are stable.
\end{lem}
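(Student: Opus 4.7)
The plan is to exploit the orthogonal change of coordinates $\bar{x} = (U^{\top}\otimes I_n)x$ introduced before the lemma, which has already been shown to decouple the controlled network (\ref{net_closed}) into the autonomous system (\ref{sys1}) governing $\bar{x}_1$ together with the $N-1$ independent autonomous systems (\ref{closed_loop}) governing $\bar{x}_i$, $i=2,3,\ldots,N$. Since $U = \bigl(\tfrac{1}{\sqrt{N}}\mathbf{1}_N\ \ U_2\bigr)$, the first block $\bar{x}_1 = \tfrac{1}{\sqrt{N}}(\mathbf{1}_N^{\top}\otimes I_n)x$ is (up to a scalar) the average of the agent states, while the stacked tail $(\bar{x}_2^{\top},\ldots,\bar{x}_N^{\top})^{\top} = (U_2^{\top}\otimes I_n)x$ represents the disagreement component.

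First I would reformulate the consensus property of Definition \ref{def_consensus}. The condition $x_i(t)-x_j(t)\to 0$ for all pairs $i,j$ is equivalent to $\bigl((I_N - \tfrac{1}{N}\mathbf{1}_N\mathbf{1}_N^{\top})\otimes I_n\bigr)x(t) \to 0$. Using the identity $U_2 U_2^{\top} = I_N - \tfrac{1}{N}\mathbf{1}_N\mathbf{1}_N^{\top}$ recorded in Section \ref{sec_notation}, and the fact that $U_2$ has full column rank, this is in turn equivalent to $(U_2^{\top}\otimes I_n)x(t)\to 0$, i.e.\ $\bar{x}_i(t)\to 0$ for each $i=2,3,\ldots,N$.

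Next I would invoke the decoupled form (\ref{closed_loop}): for $i\geq 2$, $\bar{x}_i(t)=e^{(A+\lambda_i BK)t}\bar{x}_i(0)$. By standard LTI theory, $\bar{x}_i(t)\to 0$ for every initial condition $\bar{x}_i(0)\in \mathbb{R}^n$ if and only if $A+\lambda_i BK$ is Hurwitz, which is precisely the stability of system (\ref{closed_loop}). Combining this with the reformulation above yields both implications of the lemma.

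The main subtle point, rather than a hard obstacle, is to justify that the universal quantifier over initial conditions in Definition \ref{def_consensus} transfers correctly through the coordinate change. This follows because $U^{\top}\otimes I_n$ is a bijection on $\mathbb{R}^{nN}$, so letting $x_{i0}$ range arbitrarily over $\mathbb{R}^n$ makes the transformed blocks $\bar{x}_i(0)$ range independently over $\mathbb{R}^n$ for $i=2,\ldots,N$; in particular, stability of each $A+\lambda_i BK$ individually (not merely of their direct sum acted on by some restricted initial set) is both necessary and sufficient. Once this observation is in place, the equivalence is immediate.
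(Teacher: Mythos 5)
Your proof is correct and complete: the reduction of consensus to vanishing of the disagreement component via $U_2U_2^{\top}=I_N-\tfrac{1}{N}\mathbf{1}_N\mathbf{1}_N^{\top}$, the decoupling into the systems (\ref{closed_loop}), and the observation that the blocks $\bar{x}_i(0)$ range independently over $\mathbb{R}^n$ together give both directions. The paper does not actually prove this lemma --- it records it as a well-known fact and cites the literature --- but your argument is the standard one and fits exactly the decomposition the paper has already set up, so there is nothing further to add.
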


Thus, we have transformed the problem of distributed suboptimal control for system (\ref{net_sys}) into the problem of finding a feedback gain  $K\in \mathbb{R}^{m\times n}$ such that the systems (\ref{closed_loop}) are stable and $J(K) < \gamma$. 
Moreover, since the pair $(A, B)$ is stabilizable, there exists such a feedback gain $K$ \cite{harry_2013}.

%
%

The following lemma gives  a {\em necessary} and {\em sufficient} condition for a given  feedback gain $K$ to make all systems (\ref{closed_loop}) stable and to satisfy $J(K) < \gamma$.

\begin{lem}\label{lem_n_riccati}
	Let $K$ be a feedback gain.
	Consider the systems  (\ref{closed_loop}) with  associated cost functionals (\ref{cost_cost}) and (\ref{coco}). 
	Let $\gamma>0$.
	Then all systems (\ref{closed_loop}) are stable and ${J}(K)<\gamma$ if and only if 
	there exist positive semi-definite matrices $P_i$ satisfying
		\begin{align}
		(A +  \lambda_i B K)^{\top}P_i + P_i (A + \lambda_i B K)  + \lambda_i Q +\lambda_i^2 K^{\top}R K  &<0, \label{are_ineq}\\
		\sum_{i=2}^{N} \bar{x}_{i0}^{\top} P_i \bar{x}_{i0} & <\gamma, \label{initial_condition_n_1}
	\end{align}
   for $i= 2, 3, \ldots, N$, respectively. 
\end{lem}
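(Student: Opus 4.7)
The plan is to apply Theorem \ref{thm_autonomous} individually to each of the $N-1$ decoupled autonomous subsystems (\ref{closed_loop}), for $i = 2, 3, \ldots, N$, using the identifications $\bar{A} \leftrightarrow A + \lambda_i B K$, $\bar{Q} \leftrightarrow \lambda_i Q + \lambda_i^2 K^{\top}R K$, $x_0 \leftrightarrow \bar{x}_{i0}$, and quadratic performance $J \leftrightarrow J_i(K)$. Note that $\lambda_i Q + \lambda_i^2 K^{\top}R K \geq 0$ since $\lambda_i > 0$ for $i \geq 2$, $Q \geq 0$, and $R > 0$, so Theorem \ref{thm_autonomous} applies directly.

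For the ``if'' direction I would argue as follows. Suppose positive semi-definite $P_i$ satisfying (\ref{are_ineq}) and (\ref{initial_condition_n_1}) exist. From (\ref{are_ineq}) alone, Lemma \ref{lem_autonomous} implies each $A + \lambda_i B K$ is Hurwitz (so the subsystems in (\ref{closed_loop}) are stable) and yields $J_i(K) \leq \bar{x}_{i0}^{\top} P_i \bar{x}_{i0}$. Summing over $i = 2, \ldots, N$ and invoking (\ref{cost_cost}) and (\ref{initial_condition_n_1}) produces $J(K) = \sum_{i=2}^{N} J_i(K) \leq \sum_{i=2}^{N} \bar{x}_{i0}^{\top} P_i \bar{x}_{i0} < \gamma$.

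For the ``only if'' direction, assume that the subsystems (\ref{closed_loop}) are stable and $J(K) < \gamma$. Let $\delta := \gamma - J(K) > 0$. The key idea is to reuse the perturbation construction from the proof of Lemma \ref{lem_autonomous}: for every $\epsilon > 0$ and every $i$, the unique positive semi-definite solution $P_i^{(\epsilon)}$ of
\begin{equation*}
  (A + \lambda_i B K)^{\top} P + P(A + \lambda_i B K) + \lambda_i Q + \lambda_i^2 K^{\top} R K + \epsilon I = 0
\end{equation*}
satisfies the strict inequality (\ref{are_ineq}) and $\bar{x}_{i0}^{\top} P_i^{(\epsilon)} \bar{x}_{i0} \leq J_i(K) + \epsilon \|\bar{x}_{i0}\|^2$. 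Choosing a common $\epsilon > 0$ small enough that $\epsilon \sum_{i=2}^{N} \|\bar{x}_{i0}\|^2 < \delta$, and setting $P_i := P_i^{(\epsilon)}$, yields $\sum_{i=2}^{N} \bar{x}_{i0}^{\top} P_i \bar{x}_{i0} < J(K) + \delta = \gamma$, which is exactly (\ref{initial_condition_n_1}).

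The only delicate point is this last bookkeeping step in the converse: the existence of each individual $P_i$ approximating $J_i(K)$ arbitrarily closely is immediate from Lemma \ref{lem_autonomous}, but one must make sure the approximations can be chosen \emph{simultaneously} so that their sum does not overshoot $\gamma$. Since there are only finitely many subsystems and the slack $\delta$ is strictly positive, a uniform $\epsilon$ works, so no real obstacle arises.
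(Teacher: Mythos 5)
Your proof is correct and follows essentially the same route as the paper: Theorem \ref{thm_autonomous} (via Lemma \ref{lem_autonomous}) is applied to each of the $N-1$ decoupled subsystems, and in the converse direction the strictly positive slack $\gamma - J(K)$ is distributed over the finitely many subsystems, exactly as the paper does with its $\epsilon_i$'s. The only cosmetic caveat is that the bound $\bar{x}_{i0}^{\top}P_i^{(\epsilon)}\bar{x}_{i0} \le J_i(K)+\epsilon\|\bar{x}_{i0}\|^2$ need not hold with that exact constant (only $P_i^{(\epsilon)} \downarrow Y_i$ as $\epsilon \downarrow 0$ is guaranteed, since $P_i^{(\epsilon)}-Y_i=\epsilon\int_0^\infty e^{(A+\lambda_i BK)^{\top}t}e^{(A+\lambda_i BK)t}\,dt$), but as that monotone convergence is all your final bookkeeping step actually needs, nothing breaks.
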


\begin{proof}
(if)
	Since (\ref{initial_condition_n_1}) holds, there exist sufficiently small $\epsilon_i > 0, i=2,\ldots,N$ such that $\sum_{i=2}^{N} \gamma_i < \gamma$  where $\gamma_i := \bar{x}_{i0}^{\top} P_i \bar{x}_{i0} +\epsilon_i$. 
	Because there exists $P_i$ such that (\ref{are_ineq}) and $\bar{x}_{i0}^{\top} P_i \bar{x}_{i0} < \gamma_i$ holds for all $i =2,\ldots,N$, by taking $\bar{A} = A + \lambda_i BK$ and $\bar{Q} = \lambda_i Q +\lambda_i^2 K^{\top}R K$, it follows from Theorem \ref{thm_autonomous} that all systems (\ref{closed_loop}) are stable and $J_i(K) <\gamma_i$ for $i = 2,\ldots,N$. 
	Since $J(K) = \sum_{i=2}^{N} J_i(K)$, this implies that $J(K) < \sum_{i=2}^{N} \gamma_i < \gamma$.
	
(only if)
	Since $J(K)<\gamma$ and $J(K) = \sum_{i=2}^{N} J_i(K)$, there exist sufficiently small $\epsilon_i > 0, i=2,\ldots,N$ such that $\sum_{i=2}^{N} \gamma_i < \gamma$  where $\gamma_i := J_i(K)+\epsilon_i$.
	Because all systems (\ref{closed_loop}) are stable and $J_i(K) <\gamma_i$ for $i=2,\ldots, N$, by taking $\bar{A} = A + \lambda_i BK$ and $\bar{Q} = \lambda_i Q +\lambda_i^2 K^{\top}R K$, it follows from Theorem \ref{thm_autonomous} that there exist positive semi-definite $P_i$ such that (\ref{are_ineq})  and  $\bar{x}_{i0}^{\top} P_i \bar{x}_{i0} < \gamma_i$ hold for all $i =2,\ldots,N$. 
	Since $\sum_{i=2}^{N} \gamma_i < \gamma$, this implies that $\sum_{i=2}^{N} \bar{x}_{i0}^{\top} P_i \bar{x}_{i0}  <\sum_{i=2}^{N} \gamma_i  <\gamma$.
\end{proof}

%
%
%
Lemma \ref{lem_n_riccati} establishes a {\em necessary} and {\em sufficient} condition for a given feedback gain $K$ to stabilize all systems (\ref{closed_loop}) and to satisfy $J(K)<\gamma$. 
However, Lemma \ref{lem_n_riccati}  does yet not provide a method to compute such $K$.
To this end, the following two theorems present two design methods for $K$ and, correspondingly, two suboptimal distributed control laws for multi-agent system (\ref{net_sys}) with cost functional (\ref{cost_kk}). 


\begin{thm} \label{Main1}
Consider   multi-agent system (\ref{net_sys}) with  associated cost functional (\ref{cost_kk}). Assume that the underlying graph is undirected and connected. Let $\gamma>0$.  
Choose $c$ such that
\begin{equation}\label{c1}
\frac{2}{\lambda_2+\lambda_N} \leq c < \frac{2}{\lambda_N}.
\end{equation} 
Then there exists a positive semi-definite matrix $P$ satisfying the Riccati inequality
\begin{equation} \label{one_are1}
	A^{\top}P + PA +(c^2\lambda_N^2-2c\lambda_N)PBR^{-1}B^{\top}P +\lambda_N Q < 0.
\end{equation}
Assume, moreover, that $P$ can be found such that
\begin{equation}\label{p_N1}
	x_0^{\top}\left(\left(I_N - \frac{1}{N}\mathbf{1}_N\mathbf{1}_N^{\top}\right)\otimes P\right) x_0 < \gamma.
\end{equation}
Let $K := -cR^{-1}B^{\top}P$. Then the controlled network (\ref{net_closed}) reaches consensus and the control law (\ref{control_dis}) is suboptimal, i.e., $J(K) <\gamma$.
\end{thm}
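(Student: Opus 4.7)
The plan is to reduce the theorem to Lemma \ref{lem_n_riccati} applied with the \emph{common} choice $P_i = P$ for every $i = 2, \ldots, N$, so that the conclusion follows once two things are checked: (a) the Lyapunov-type inequality (\ref{are_ineq}) holds at every $\lambda_i$ with this single $P$, and (b) the summed initial-condition bound (\ref{initial_condition_n_1}) collapses to (\ref{p_N1}). For the feedback gain $K := -cR^{-1}B^\top P$, a direct expansion gives
\[
(A + \lambda_i BK)^\top P + P(A + \lambda_i BK) + \lambda_i Q + \lambda_i^2 K^\top R K \;=\; A^\top P + PA + (c^2\lambda_i^2 - 2c\lambda_i)\,PBR^{-1}B^\top P + \lambda_i Q,
\]
so (a) amounts to showing that this expression is strictly negative for $i = 2, \ldots, N$, and (\ref{one_are1}) is precisely that it is negative at $i = N$.

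Existence of $P$ satisfying (\ref{one_are1}) is standard: since $0 < c < 2/\lambda_N$, the scalar $\beta := 2c\lambda_N - c^2\lambda_N^2$ is strictly positive, and (\ref{one_are1}) becomes $A^\top P + PA - PB(R/\beta)^{-1}B^\top P + \lambda_N Q < 0$. Stabilizability of $(A,B)$, together with $\lambda_N Q \ge 0$, guarantees a positive semi-definite solution, obtained for instance by slightly perturbing the stabilizing solution of the associated algebraic Riccati equation, in the spirit of (\ref{new_lya}) in the proof of Lemma \ref{lem_autonomous}.

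The decisive step, and the one I expect to be the main obstacle, is a scalar inequality on the Laplacian spectrum. Writing $\alpha(\lambda) := c^2 \lambda^2 - 2c\lambda = (c\lambda-1)^2 - 1$, the $i$-th expression above equals the $\lambda_N$-expression plus $(\alpha(\lambda_i) - \alpha(\lambda_N))\,PBR^{-1}B^\top P + (\lambda_i - \lambda_N)Q$. Since $PBR^{-1}B^\top P \ge 0$, $Q \ge 0$, and $\lambda_i \le \lambda_N$, it suffices to show $\alpha(\lambda_i) \le \alpha(\lambda_N)$, i.e., $|c\lambda_i - 1| \le |c\lambda_N - 1|$. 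The combined bounds on $c$ give $c\lambda_N \in [1, 2)$, so $|c\lambda_N - 1| = c\lambda_N - 1$; the lower bound $c \ge 2/(\lambda_2+\lambda_N)$ is exactly what forces $|c\lambda_2 - 1| \le c\lambda_N - 1$ (geometrically, $\lambda_N$ is at least as far from the vertex $1/c$ of the parabola as $\lambda_2$), and for intermediate $\lambda_i \in [\lambda_2,\lambda_N]$ the bound is then immediate from the parabola's shape.

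Finally, taking all $P_i$ equal to $P$ in (\ref{initial_condition_n_1}) and using $\bar x_0 = (U^\top \otimes I_n)x_0$ together with $U = (\tfrac{1}{\sqrt N}\mathbf{1}_N \; U_2)$ from Section \ref{sec_notation}, a short Kronecker calculation yields
\[
\sum_{i=2}^N \bar x_{i0}^\top P \bar x_{i0} \;=\; x_0^\top\!\left(\left(I_N - \tfrac{1}{N}\mathbf{1}_N \mathbf{1}_N^\top\right) \otimes P\right)\! x_0,
\]
which is less than $\gamma$ by (\ref{p_N1}). Lemma \ref{lem_n_riccati} then delivers simultaneously that each $A + \lambda_i BK$ is Hurwitz --- whence Lemma \ref{lem_stable_consensus} gives consensus --- and that $J(K) < \gamma$.
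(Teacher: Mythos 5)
Your proof is correct and follows essentially the same route as the paper: reduce to Lemma \ref{lem_n_riccati} with the common choice $P_i = P$, verify the spectral inequality $c^2\lambda_i^2 - 2c\lambda_i \le c^2\lambda_N^2 - 2c\lambda_N < 0$ (which you justify more explicitly via the parabola $(c\lambda-1)^2-1$ than the paper's ``it can be verified''), and collapse the initial-condition sum to \eqref{p_N1} using $U_2U_2^\top = I_N - \tfrac{1}{N}\mathbf{1}_N\mathbf{1}_N^\top$. Your added detail on the existence of $P$ via the perturbed Riccati equation matches what the paper defers to Remark \ref{rem_main1}.
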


\begin{proof}
Using the upper and lower bounds on $c$ given by (\ref{c1}), it can be verified that $c^2\lambda_i^2-2c\lambda_i \leq c^2\lambda_N^2-2c\lambda_N <0$ for  $i=2,3,\ldots,N$. 
Since also $\lambda_i \leq \lambda_N$, $P$ is a solution to the $N-1$ Riccati inequalities
\begin{equation}\label{n_are1}
	A^{\top}P + PA +(c^2\lambda_i^2-2c\lambda_i)PBR^{-1}B^{\top}P +\lambda_i Q < 0,\quad i=2,\ldots,N.
\end{equation}
Equivalently, $P$ also satisfies
the Lyapunov inequalities  
\begin{equation}\label{lya_ineq}
\begin{aligned}
	&(A-c\lambda_i BR^{-1}B^{\top}P)^{\top}P + P(A- c\lambda_iBR^{-1}B^{\top}P)  \\
	&\qquad + \lambda_i Q + c^2\lambda_i^2 PBR^{-1}B^{\top}P < 0,\quad i=2,\ldots,N.
\end{aligned}
\end{equation}

Next, recall that $\bar{x} = (U^{\top}\otimes I_n)x$ with $U = \left( \frac{1}{\sqrt{N}}\mathbf{1}_N\quad U_2 \right)$.  
From this it is easily seen that $(\bar{x}_{20}^{\top}, \bar{x}_{30}^{\top}, \cdots , \bar{x}_{N0}^{\top})^{\top} = (U_2^{\top} \otimes I_n)x_0 $. 
Also, $U_2 U^{\top}_2 = I_N - \frac{1}{N}\mathbf{1}_N\mathbf{1}_N^{\top}$.
Since  (\ref{p_N1}) holds, we have
\begin{align*}
	x_0^{\top} \left( U_2 U_2^{\top} \otimes P \right)x_0 &<\gamma \\
\Leftrightarrow \quad
	((U_2^{\top} \otimes I_n)x_0)^{\top} \left( I_{N-1} \otimes P \right) ((U_2^{\top} \otimes I_n)x_0) &<\gamma \\
\Leftrightarrow \quad
		(\bar{x}_{20}^{\top}, \bar{x}_{30}^{\top}, \cdots , \bar{x}_{N0}^{\top})
	\left(I_{N-1}\otimes P\right)
	(\bar{x}_{20}^{\top}, \bar{x}_{30}^{\top}, \cdots , \bar{x}_{N0}^{\top})^{\top} &<\gamma,
\end{align*}
which is equivalent to  
\begin{equation}\label{xbar}
	\sum_{i=2}^{N} \bar{x}_{i0}^{\top} P \bar{x}_{i0} <\gamma.
\end{equation}

Taking $P_i = P$ for $i = 2,3,\ldots,N$ and $K := -cR^{-1}B^{\top}P$ in inequalities (\ref{are_ineq}) and (\ref{initial_condition_n_1}) immediately gives us inequalities (\ref{lya_ineq}) and (\ref{xbar}).
Then it follows from Lemma \ref{lem_n_riccati} that all systems (\ref{closed_loop}) are stable and $J(K)<\gamma$. Furthermore, it follows from Lemma \ref{lem_stable_consensus} that the controlled network (\ref{net_closed}) reaches consensus and $J(K)<\gamma$.
\end{proof}

\begin{rem}\label{rem_main1}
	Theorem \ref{Main1} states that that after choosing $c$ satisfying (\ref{c1}) and positive semi-definite $P$ satisfying (\ref{one_are1}), the distributed control law with local gain $K = -c R^{-1}B^{\top}P$ is suboptimal for all initial states of the network that satisfy the inequality (\ref{p_N1}). 
	The question then arises: how should we choose $c$ and $P$ such that this local gain is suboptimal for as many initial states as possible? 
	By writing $x_0 = ( x_{10},x_{20}, \ldots,x_{N0} )$, it is easily seen that (\ref{p_N1}) is equivalent to
	\begin{equation}\label{xixj}
		\frac{1}{N}\sum_{i=1}^{N}\sum_{j>i}^{N}(x_{i0}-x_{j0})^{\top}P(x_{i0}-x_{j0}) <\gamma.
	\end{equation}
 	In other words, the smaller $P$, the bigger the differences between the local initial states are allowed to be, while still leading to suboptimality with respect to $\gamma$. 
 	In other words, we should try to find $P$ as small as possible.
  	In fact, one can find a positive definite solution $P(c,\epsilon)$ to (\ref{one_are1}) by solving the Riccati equation
	\begin{equation}\label{are}
		A^{\top}P + PA -PB\bar{R}^{-1}B^{\top}P +\bar{Q}= 0
	\end{equation} 
	with  $\bar{R}(c) = \frac{1}{-c^2\lambda_N^2+2c\lambda_N}R$ and $\bar{Q}(\epsilon) =\lambda_N Q +\epsilon I_n $	where  $c$ is chosen as in (\ref{c1}) and    $\epsilon >0$.
	If $c_1$ and $c_2$ as in (\ref{c1}) satisfy $c_1 \leq c_2$, then we have
	$\bar{R}(c_1) \leq \bar{R}(c_2)$,
	so, clearly, $P(c_1,\epsilon) \leq P(c_2,\epsilon)$.
	Similarly, if $0 <  \epsilon_1 \leq \epsilon_2$,  we immediately have  $\bar{Q}(\epsilon_1) \leq \bar{Q}(\epsilon_2)$. Again, it follows that $P(c,\epsilon_1) \leq P(c,\epsilon_2)$.
	Therefore, if we choose $\epsilon>0$ very close to $0$ and $c = \frac{2}{\lambda_2+\lambda_N}$,  we find the `best' solution to the Riccati inequality (\ref{one_are1}) in the sense explained above.
\end{rem}

Theorem \ref{Main1} provides a method to find a suboptimal distributed control law for particular choices of the parameter $c$. 
In fact, such $c$ can be also chosen in another way, which is shown in the next theorem:
\begin{thm} \label{Main2}
	Consider multi-agent system (\ref{net_sys}) with associated cost functional (\ref{cost_kk}). 
	Assume that the underlying graph  is undirected and connected. 
	Let $\gamma >0$. 
	Choose $c$ such that
	\begin{equation}\label{c2}
	0 < c <\frac{2}{\lambda_2+\lambda_N}.
	\end{equation} 
	Then there exists a positive semi-definite $P$ satisfying Riccati inequality
	\begin{equation} \label{one_are}
	A^{\top}P + PA +(c^2\lambda_2^2-2c\lambda_2)PBR^{-1}B^{\top}P +\lambda_N Q < 0.
	\end{equation}
	Assume, moreover, that $P$ can be found such that

	\begin{equation}\label{p_N}	
	x_0^{\top}\left(\left(I_N - \frac{1}{N}\mathbf{1}_N\mathbf{1}_N^{\top}\right)\otimes P\right) x_0 < \gamma. 
	\end{equation}
	Let $K := -cR^{-1}B^{\top}P$. Then the  controlled network  (\ref{net_closed}) reaches consensus and the control law (\ref{control_dis}) is suboptimal, i.e., $J(K)<\gamma$.
\end{thm}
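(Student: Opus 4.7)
The plan is to mirror the structure of the proof of Theorem \ref{Main1}: reduce the single Riccati inequality in $P$ to a family of $N-1$ Lyapunov inequalities (one per nonzero Laplacian eigenvalue), rewrite the initial-condition bound in the transformed coordinates, and then invoke Lemma \ref{lem_n_riccati} and Lemma \ref{lem_stable_consensus}. The novelty compared to Theorem \ref{Main1} lies in the range of admissible $c$, so the key technical step is to check that the coefficient $c^2\lambda_i^2-2c\lambda_i$ appearing in the $i$-th Riccati inequality is, for this new range of $c$, dominated by $c^2\lambda_2^2-2c\lambda_2$ rather than by $c^2\lambda_N^2-2c\lambda_N$.

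First I would analyze the scalar function $f(\lambda):=c^2\lambda^2-2c\lambda$ on the interval $[\lambda_2,\lambda_N]$. This parabola has vertex at $\lambda=1/c$, and from the assumption $0<c<2/(\lambda_2+\lambda_N)$ one has $1/c>(\lambda_2+\lambda_N)/2$, which means $\lambda_2$ is farther from the vertex than $\lambda_N$ (whether or not $\lambda_N\le 1/c$). Hence $f$ attains its maximum on $[\lambda_2,\lambda_N]$ at $\lambda_2$, and moreover $f(\lambda_2)=c\lambda_2(c\lambda_2-2)<0$ since $c<2/(\lambda_2+\lambda_N)<2/\lambda_2$. Combining this with $PBR^{-1}B^{\top}P\ge 0$ and $\lambda_i Q\le \lambda_N Q$ for $i=2,\ldots,N$, the single Riccati inequality \eqref{one_are} yields the $N-1$ inequalities
\begin{equation*}
A^{\top}P+PA+(c^2\lambda_i^2-2c\lambda_i)PBR^{-1}B^{\top}P+\lambda_i Q<0,\quad i=2,\ldots,N,
\end{equation*}
and these are equivalent, upon completing the square, to the Lyapunov inequalities
\begin{equation*}
(A-c\lambda_i BR^{-1}B^{\top}P)^{\top}P+P(A-c\lambda_i BR^{-1}B^{\top}P)+\lambda_i Q+c^2\lambda_i^2 PBR^{-1}B^{\top}P<0.
\end{equation*}

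Next I would handle the initial-condition inequality \eqref{p_N} exactly as in Theorem \ref{Main1}: using $U_2U_2^{\top}=I_N-\frac{1}{N}\mathbf{1}_N\mathbf{1}_N^{\top}$ and $(\bar{x}_{20}^{\top},\ldots,\bar{x}_{N0}^{\top})^{\top}=(U_2^{\top}\otimes I_n)x_0$, the condition \eqref{p_N} rewrites as $\sum_{i=2}^{N}\bar{x}_{i0}^{\top}P\bar{x}_{i0}<\gamma$. Setting $P_i:=P$ for all $i=2,\ldots,N$ and $K:=-cR^{-1}B^{\top}P$ then makes the hypotheses \eqref{are_ineq} and \eqref{initial_condition_n_1} of Lemma \ref{lem_n_riccati} satisfied, so every system \eqref{closed_loop} is stable and $J(K)<\gamma$. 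Finally, Lemma \ref{lem_stable_consensus} upgrades the simultaneous stability of the $N-1$ modal subsystems to consensus of the controlled network \eqref{net_closed}.

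The existence of a positive semi-definite $P$ solving \eqref{one_are} is not an obstacle: since $c^2\lambda_2^2-2c\lambda_2<0$ and $(A,B)$ is stabilizable, \eqref{one_are} can be produced by solving a standard algebraic Riccati equation with weights rescaled by $1/(2c\lambda_2-c^2\lambda_2^2)$ and $\lambda_N$ (plus a small $\epsilon I_n$ perturbation), analogous to Remark \ref{rem_main1}. The one step that genuinely differs from the proof of Theorem \ref{Main1} and that I would treat most carefully is the eigenvalue-location argument showing that $f(\lambda_i)\le f(\lambda_2)$ throughout $[\lambda_2,\lambda_N]$ for $c$ in the new range; once that is in place, the remainder is a direct transcription of the argument given for Theorem \ref{Main1}.
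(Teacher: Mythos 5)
Your proposal is correct and follows exactly the route the paper intends: the paper omits the proof of Theorem \ref{Main2} with the remark that it is ``similar to the proof of Theorem \ref{Main1},'' and you reproduce that argument faithfully, correctly isolating the one genuinely new step --- that for $0<c<2/(\lambda_2+\lambda_N)$ the vertex $1/c$ of the parabola $f(\lambda)=c^2\lambda^2-2c\lambda$ lies to the right of the midpoint of $[\lambda_2,\lambda_N]$, so $f$ is maximized at $\lambda_2$ (and is negative there), which lets the single inequality \eqref{one_are} dominate all $N-1$ modal Riccati inequalities. The remaining steps (completing the square, rewriting \eqref{p_N} via $U_2U_2^{\top}$, and invoking Lemmas \ref{lem_n_riccati} and \ref{lem_stable_consensus}) are a direct transcription of the proof of Theorem \ref{Main1}, as you note.
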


\begin{proof}
	The proof is similar to the proof of Theorem \ref{Main1} and hence is omitted here.
\end{proof}

\begin{rem}\label{rem_main2}
	Theorem \ref{Main2} states that that after choosing $c$ satisfying (\ref{c2}) and positive semi-definite $P$ satisfying (\ref{one_are}), the distributed control law with local gain $K = -c R^{-1}B^{\top}P$ is suboptimal for all initial states of the network that satisfy the inequality (\ref{p_N}). 
	Again, the question then arises: how should we choose $c$ and $P$ such that this local gain is suboptimal for as many initial states as possible? 
	Following the idea in Remark \ref{rem_main1}, if we choose $\epsilon>0$ very close to $0$ and $c>0$ very close to $\frac{2}{\lambda_2+\lambda_N}$,  we find the `best' solution to the Riccati inequality (\ref{one_are}) in the sense as explained in Remark \ref{rem_main1}.
\end{rem}

Note that, in Theorem \ref{Main1} and Theorem \ref{Main2}, in order to compute a suitable feedback gain $K$, one needs to know $\lambda_2$ and $\lambda_N$, the smallest nonzero eigenvalue (the algebraic connectivity) and the largest eigenvalue of the graph Laplacian, exactly. 
This requires so-called global information on the network graph which might not always be available. 
There exist algorithms to estimate $\lambda_2$ in a distributed way, yielding lower and upper bounds, see e.g. \cite{ARAGUES20143253}. 
Moreover, also an upper bound for $\lambda_N$ can be obtained in terms of the maximal node degree of the graph, see \cite{eigenvaluesL}. 
Then the question arises: can we still find a suboptimal controller reaching consensus, using as information only a lower bound for $\lambda_2$ and an upper bound for $\lambda_N$?
The answer to this question is affirmative, as shown in the following theorem. 
\begin{thm}\label{Main3}
Let a lower bound for $\lambda_2$ be given by $l_2 > 0$ and an upper bound for $\lambda_N$ be given by $L_N$. 
Let $\gamma > 0$. Choose $c$ such that 
\begin{equation}\label{c3}
	\frac{2}{l_2+L_N} \leq c < \frac{2}{L_N}. 
\end{equation}
Define $K=-cR^{-1}B^{\top}P$, 
with $P\geq 0$ satisfying
\begin{equation}\label{pp}
	A^{\top}P + PA +(c^2L_N^2-2cL_N)PBR^{-1}B^{\top}P +L_N Q < 0.
\end{equation} 
Then the controlled network reaches consensus and the distributed control law $L \otimes K$ is suboptimal, i.e. $J(K) <\gamma$, for all initial states $x_0$ that satisfy \eqref{p_N1}, equivalently \eqref{xixj}.

Furthermore, if we choose $c$ such that 
\begin{equation}\label{c4}
	0 < c <\frac{2}{l_2+L_N}.
\end{equation}
and define $K=-cR^{-1}B^{\top}P$ 
with $P \geq 0$ satisfying
\begin{equation}\label{p}
	A^{\top}P + PA +(c^2 l_2^2-2c l_2)PBR^{-1}B^{\top}P +L_N Q < 0,
\end{equation} 
then, still, the controlled network reaches consensus and the distributed control law $L \otimes K$ is  suboptimal for all $x_0$ that satisfy \eqref{p_N}, equivalently \eqref{xixj}.
\end{thm}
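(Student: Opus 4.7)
The plan is to adapt the proofs of Theorems \ref{Main1} and \ref{Main2}, replacing the exact eigenvalues $\lambda_2,\lambda_N$ by their known bounds $l_2,L_N$, and then verifying that a single Riccati inequality still implies the $N-1$ decoupled Lyapunov-type inequalities required by Lemma \ref{lem_n_riccati}. The key object to analyze is the quadratic $f(\mu):=c^2\mu^2-2c\mu$, whose vertex lies at $\mu=1/c$. In both cases I aim to show $f(\lambda_i)\le f(L_N)<0$ (respectively $f(\lambda_i)\le f(l_2)<0$) for every $i=2,\ldots,N$, so that together with $\lambda_i\le L_N$ and $PBR^{-1}B^\top P\ge 0$, $Q\ge 0$, the hypothesized Riccati inequality dominates each of the $N-1$ inequalities $A^\top P+PA+(c^2\lambda_i^2-2c\lambda_i)PBR^{-1}B^\top P+\lambda_i Q<0$.

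For the first case, $c\in[2/(l_2+L_N),\,2/L_N)$, the bounds on $c$ are equivalent to $L_N/2<1/c\le (l_2+L_N)/2$, so the vertex of $f$ lies at or to the left of the midpoint of $[l_2,L_N]$. Since $\lambda_2,\ldots,\lambda_N$ lie in $[l_2,L_N]$, the maximum of $f$ over these eigenvalues is attained at $L_N$, giving $f(\lambda_i)\le c^2L_N^2-2cL_N<0$. Completing the square in the resulting $N-1$ Riccati inequalities reproduces exactly the Lyapunov inequalities (\ref{lya_ineq}) with $P_i:=P$ and $K=-cR^{-1}B^\top P$. For the second case, $c\in(0,\,2/(l_2+L_N))$, the same analysis shows $1/c>(l_2+L_N)/2$, so the maximum of $f$ on $[l_2,L_N]$ is now attained at $l_2$, and $f(\lambda_i)\le c^2l_2^2-2cl_2<0$ (the strict inequality using $c<2/(l_2+L_N)<2/l_2$); the analogue of (\ref{lya_ineq}) follows from (\ref{p}) in exactly the same manner.

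The cost bound is then handled verbatim from the proof of Theorem \ref{Main1}: writing $U=\bigl(\frac{1}{\sqrt{N}}\mathbf{1}_N\;\; U_2\bigr)$ and using $U_2U_2^\top=I_N-\frac{1}{N}\mathbf{1}_N\mathbf{1}_N^\top$, the hypothesis (\ref{p_N1}) is equivalent to $\sum_{i=2}^{N}\bar{x}_{i0}^\top P\bar{x}_{i0}<\gamma$. Lemma \ref{lem_n_riccati} applied with the common choice $P_i:=P$ yields stability of all systems (\ref{closed_loop}) and $J(K)<\gamma$, while Lemma \ref{lem_stable_consensus} delivers consensus. The expression (\ref{xixj}) is just the identity $x_0^\top((I_N-\tfrac{1}{N}\mathbf{1}_N\mathbf{1}_N^\top)\otimes P)x_0=\tfrac{1}{N}\sum_{i<j}(x_{i0}-x_{j0})^\top P(x_{i0}-x_{j0})$ observed in Remark \ref{rem_main1}, which is purely algebraic.

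The main (though mild) obstacle is the vertex bookkeeping for $f$: one has to argue that, in each case, the dominating bound comes from the correct endpoint of $[l_2,L_N]$ \emph{uniformly} over $\lambda_2,\ldots,\lambda_N$, even though these eigenvalues are unknown. Once that monotonicity is in hand, the proof reduces to the arguments already established for Theorems \ref{Main1} and \ref{Main2}, so no new machinery is needed.
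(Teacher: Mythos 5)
Your proof is correct and follows exactly the route the paper intends: the paper itself omits the argument, stating only that it proceeds ``along the lines of the proofs of Theorem \ref{Main1} and Theorem \ref{Main2},'' and your vertex analysis of $f(\mu)=c^2\mu^2-2c\mu$ on $[l_2,L_N]$ is precisely the verification step that the proof of Theorem \ref{Main1} performs on $[\lambda_2,\lambda_N]$, now using that all nonzero Laplacian eigenvalues lie in $[l_2,L_N]$. The remaining steps (completing the square to get the Lyapunov inequalities, the $U_2U_2^\top$ identity for the initial-condition bound, and the appeal to Lemmas \ref{lem_n_riccati} and \ref{lem_stable_consensus}) are taken verbatim from that proof, as you indicate.
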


\begin{proof}
A proof can be given along the lines of the proofs of Theorem \ref{Main1} and Theorem \ref{Main2}.
\end{proof}		

\begin{rem}
	Note that also in Theorem \ref{Main3} the question arises how to choose $c>0$ and $P\geq 0$ such that the local gain is suboptimal for as many initial states $x_0$ as possible. 
	Following the same idea as in Remark \ref{rem_main1} and Remark \ref{rem_main2}, if we choose $\epsilon >0$ very close to $0$ and $c>0$ equal to $\frac{2}{l_2 +L_N}$ in \eqref{pp} (respectively very close to $\frac{2}{l_2 +L_N}$ in \eqref{p}), we find the `best' solution to the Riccati inequalities (\ref{pp}) and (\ref{p}).
	
	Moreover, one may also ask the question: can we compare, with the same choice for $c$, solutions  to (\ref{pp}) with solutions to (\ref{one_are1}), and also solutions to (\ref{p}) with solutions to (\ref{one_are})?
	The answer is affirmative. 
	Choose $c$ that satisfies both conditions (\ref{c1}) and (\ref{c3}). One can then check that the computed positive semi-definite solution to (\ref{pp}) is indeed `larger' than that to (\ref{one_are1}) as explained in Remark \ref{rem_main1}. A similar remark holds for the positive semi-definite solutions to (\ref{p}) and  corresponding solutions to (\ref{one_are}) if $c$ satisfies both (\ref{c2}) and (\ref{c4}). 
	We conclude that if,  instead of using the exact values $\lambda_2$ and $\lambda_N$, we use a lower bound, respectively upper bound for these eigenvalues, then the computed distributed control law is suboptimal for `less' initial values of the agents.
\end{rem}	

%
%
%
\section{Illustrative Example}\label{sec_simulation}
In this section we use a simulation example borrowed from \cite{Nguyen2015} to illustrate the proposed design method for suboptimal distributed controllers. 
Consider a group of 8 linear oscillators with identical dynamics
\begin{equation}\label{oscillators}
\dot{x}_i = A x_i +B u_i, \quad x_i(0) =x_{i0}, \quad i = 1,\ldots ,8
\end{equation}
with 
\begin{equation*}
A = 
\begin{pmatrix}
	0 & 1\\
	-1 & 0
\end{pmatrix}, 
\quad
B = 
\begin{pmatrix}
	0 \\ 1
\end{pmatrix}.
\end{equation*}
Assume the underlying graph is the undirected line graph with Laplacian matrix
\begin{equation*}
L = \begin{pmatrix}
 1 & -1 & 0 & 0 & 0 & 0 & 0 & 0 \\
-1 & 2 & -1 & 0 & 0 & 0 & 0 & 0\\
0 & -1 & 2 & -1 & 0 & 0 & 0 & 0 \\
0 & 0 & -1 & 2 & -1 & 0 & 0 & 0 \\
0 & 0 & 0 & -1 & 2 & -1 & 0 & 0 \\
0 & 0 & 0 & 0 & -1 & 2 & -1 & 0 \\
0 & 0 & 0 & 0 & 0 & -1 & 2 & -1\\
 0 & 0 & 0 & 0 & 0 & 0 & -1 & 1
\end{pmatrix}.
\end{equation*}
We consider the cost functional
\begin{equation}\label{cost_exam}
	J(u) = \int_{0}^{\infty}x^{\top}(L\otimes Q)x +u^{\top}(I_{8}\otimes R)u \ dt
\end{equation} 
where the matrices $Q$ and $R$ are chosen to be
\begin{equation*}
	Q =\begin{pmatrix}
		2 & 0 \\ 0 & 1
	\end{pmatrix},\quad
	R = 1.
\end{equation*}
Let the desired upper bound for the cost functional (\ref{cost_exam}) be given as $\gamma =3$.
Our goal is to design a control law $u =(L\otimes K)x$ such that the controlled network reaches consensus and the associated cost is less than $\gamma = 3$.

In this example, we adopt the control design method given in Theorem \ref{Main1}.
The smallest nonzero and largest eigenvalue of $L$ are $\lambda_2 = 0.0979$ and $\lambda_{8} = 3.8478$.
First, we compute a positive semi-definite solution $P$ to (\ref{one_are1}) by solving the Riccati equation
\begin{equation}\label{are8}
A^{\top}P + PA +(c^2\lambda_8^2-2c\lambda_8)PBR^{-1}B^{\top}P +\lambda_{8} Q + \epsilon I_2  = 0
\end{equation}
with $\epsilon >0$ chosen small as mentioned in Remark \ref{rem_main1}. 
Here we choose $\epsilon = 0.001$.
Moreover, we choose $c = \frac{2}{\lambda_2 +\lambda_8} = 0.5$, which is the `best' choice as mentioned in Remark \ref{rem_main1}.
Then, by solving (\ref{are8}) in Matlab, we obtain 
\begin{equation*}
	P = \begin{pmatrix}
   12.1168 &    3.1303\\
3.1303 &    8.3081
	\end{pmatrix}.
\end{equation*}
Correspondingly, the local feedback gain is then equal to  $K = \begin{pmatrix}  -1.5652  & -4.1541 \end{pmatrix}$.

The corresponding distributed diffusive control law is now suboptimal (with respect to the given $\gamma$) for all initial conditions $x_0$ that satisfy the inequality
\begin{equation}
x_0^{\top}\left(\left(I_{8} - \frac{1}{{8}}\mathbf{1}_{8}\mathbf{1}_{8}^{\top}\right)\otimes P\right) x_0 < 3,
\end{equation}
which is equivalent to 
\begin{equation*}
	\frac{1}{8}\sum_{i=1}^{8}\sum_{j>i}^{8}(x_{i0}-x_{j0})^{\top}P(x_{i0}-x_{j0}) <3,
\end{equation*}
which, for example, is satisfied by the inital conditions
$x_{10}^{\top} = 
\begin{pmatrix} 
	-0.08 & 0.11
\end{pmatrix}$,
$x_{20}^{\top} = 
\begin{pmatrix} 
 0.12 & -0.08
\end{pmatrix}$,
$x_{30}^{\top} = 
\begin{pmatrix} 
-0.09 & -0.14
\end{pmatrix}$,
$x_{40}^{\top} = 
\begin{pmatrix} 
-0.12 & 0.04
\end{pmatrix}$,
$x_{50}^{\top} = 
\begin{pmatrix} 
0.07 & -0.16
\end{pmatrix}$,
$x_{60}^{\top} = 
\begin{pmatrix} 
-0.21 & 0.12
\end{pmatrix}$,
$x_{70}^{\top} = 
\begin{pmatrix} 
0.15 & -0.22
\end{pmatrix}$,
$x_{80}^{\top} = 
\begin{pmatrix} 
-0.17 & -0.14
\end{pmatrix}$.
The plots of the eight decoupled oscillators without control are shown in Figure \ref{decoupled}.
\begin{figure}[t!]
	\centering
	\includegraphics[width=8.5cm]{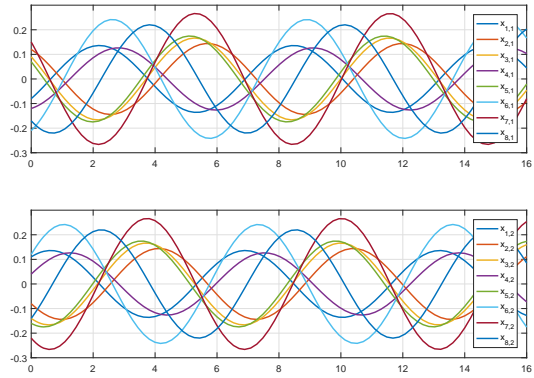}
	\caption{Plots of the state vector $x^1 = (x_{1,1},\ldots, x_{8,1})$ (upper plot) and $x^2 = (x_{1,2},\ldots, x_{8,2})$ (lower plot) of the 8 decoupled  oscillators without control} \label{decoupled}
\end{figure}
Figure \ref{consensus} shows that the controlled network of oscillators reaches consensus.
\begin{figure}[t!]
	\centering
	\includegraphics[width=8.5cm]{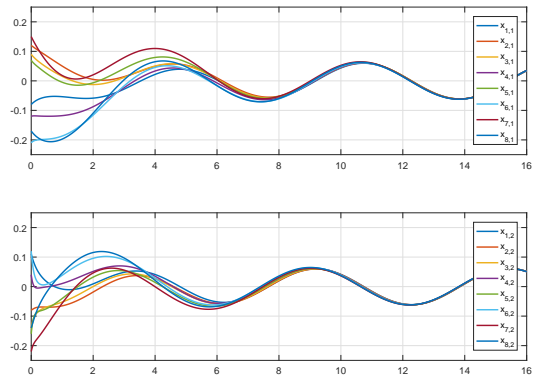}
	\caption{Plots of the state vector $x^1 = (x_{1,1},\ldots, x_{8,1})$ (upper plot) and $x^2 = (x_{1,2},\ldots, x_{8,2})$ (lower plot) of the controlled oscillator network} \label{consensus}
\end{figure}

\balance
\section{Conclusion}\label{sec_conclusion}
In this paper, we have studied a suboptimal distributed linear quadratic control problem for undirected linear multi-agent networks. 
Given a multi-agent system with identical linear agent dynamics and an associated global quadratic cost functional, we provide two design methods for computing suboptimal distributed diffusive control laws such that the controlled network is guaranteed to reach consensus and the associated cost is smaller than a given upper bound for suitable initial conditions. 
The computation of the local gain involves finding solutions of a single Riccati inequality, whose dimension is equal to the dimension of the agent dynamics, and also involves the smallest nonzero and largest eigenvalue of the graph Laplacian. 
As an extension, we remove the requirement of having exact knowledge on the smallest nonzero and largest eigenvalue of the graph Laplacian by, instead, using only lower and upper bounds for these eigenvalues.

\ifCLASSOPTIONcaptionsoff
  \newpage
\fi



%
%
%

\bibliographystyle{ieeetran}
\bibliography{suboptimal}

%

%
%
%




\end{document}